\journal{Journal of King Saud University--Science}
\newcounter{Theorem}
\theoremstyle{plain}
\newtheorem{theorem}[Theorem]{\bf Theorem}
\newtheorem{corollary}[Theorem]{\bf Corollary}
\newtheorem{lemma}[Theorem]{\bf Lemma}
\theoremstyle{remark}
\theoremstyle{definition}
\newtheorem{example}[Theorem]{\bf Example}
\numberwithin{Theorem}{section} 
\numberwithin{equation}{section}
\DeclareMathOperator{\var}{Var}
\newcommand{\Var}{\mathop{\var}\limits}
\begin{document}
	
\bibliographystyle{abbrvnat}

\begin{frontmatter}

\title{Existence theorems for a nonlinear second-order\\
distributional differential equation\tnoteref{mytitlenote}}

\tnotetext[mytitlenote]{Supported by the program of High-end Foreign
Experts of the SAFEA (No. GDW20163200216) and by FCT and CIDMA 
within project UID/MAT/04106/2013.}


\author[mymainaddress]{Wei Liu}
\ead{liuw626@hhu.edu.cn}

\author[mymainaddress]{Guoju Ye}
\ead{yegj@hhu.edu.cn}

\author[mymainaddress,mysecondaryaddress]{Dafang Zhao}
\ead{dafangzhao@163.com}

\author[mythirdaddress,myfourthaddress]{Delfim F. M. Torres\corref{mycorrespondingauthor}}
\cortext[mycorrespondingauthor]{Corresponding author (delfim@ua.pt).}
\ead{delfim@ua.pt, delfim@aims-cameroon.org}

\address[mymainaddress]{College of Science, Hohai University, Nanjing 210098, P. R. China}

\address[mysecondaryaddress]{School of Mathematics and Statistics, 
Hubei Normal University, Huangshi 435002, P. R. China}

\address[mythirdaddress]{Center for Research and Development 
in Mathematics and Applications (CIDMA),\\ 
Department of Mathematics, University of Aveiro, 
3810-193 Aveiro, Portugal}

\address[myfourthaddress]{African Institute for Mathematical Sciences (AIMS-Cameroon),
P. O. Box 608 Limbe, Cameroon}


\begin{abstract}
In this work, we are concerned with existence of solutions for 
a nonlinear second-order distributional differential equation, 
which contains measure differential equations and stochastic 
differential equations as special cases. The proof is based 
on the Leray--Schauder nonlinear alternative and 
Kurzweil--Henstock--Stieltjes integrals. Meanwhile, 
examples are worked out to demonstrate 
that the main results are sharp.
\end{abstract}


\begin{keyword}
distributional differential equation
\sep measure differential equation
\sep stochastic differential equation
\sep regulated function
\sep Kurzweil--Henstock--Stieltjes integral
\sep Leray--Schauder nonlinear alternative.

\medskip

\MSC[2010] 26A39 \sep 34B15 \sep 46G12.
\end{keyword}

\end{frontmatter}


\section{Introduction}
\label{sec1}

The first-order distributional differential equation (DDE) in the form
\begin{equation}
\label{FODDE}
Dx=f(t,x)+g(t,x)Du,
\end{equation}
where $Dx$ and $Du$ stand, respectively, 
for the distributional derivative of
function $x$ and $u$ in the sense of Schwartz,
has been studied as a perturbed system of the ordinary
differential equation (ODE)
\begin{equation*}
x'=f(t,x)\quad \left(':=\frac{d}{dt}\right).
\end{equation*}
The DDE \eqref{FODDE} provides a good model for many physical processes,
biological neural nets, pulse frequency modulation systems and
automatic control problems \citep{DS71,DS72,SL74}. Particularly, when
$u$ is an absolute continuous function, then \eqref{FODDE} reduces to
an ODE. However, in physical systems, one cannot always expect the
perturbations to be well-behaved. For example, if $u$ is a function
of boundary variation, $Du$ can be identified with a Stieltjes
measure and will have the effect of suddenly changing the state of
the system at the points of discontinuity of $u$, that is, the
system could be controlled by some impulsive force. In this case,
\eqref{FODDE} is also called a measure differential equation (MDE),
see \citep{DS71,DS72,DB09,FM13,FMS12,SL74,MS16,BS14,AS13,AS15}.
Results concerning existence, uniqueness, and stability of solutions,
were obtained in those papers. However, this situation is not the worst, 
because it is well-known that the solutions of a MDE, if exist, 
are still functions of bounded variation. The case when $u$ is 
a continuous function has also been considered in \citep{LYWX12,ZY15}. 
The integral there is understood as a Kurzweil--Henstock integral
\citep{PK06,KUR57,Lee,PT93,SSYE,Eri08,MT94,TVR02,YL15} (or
Kurzweil--Henstock--Stieltjes integral, or distributional
Kurzweil--Henstock integral), which is a generalization of the
Lebesgue integral. Especially, if $u$ denotes a Wiener process 
(or Brownian motion), then \eqref{FODDE} becomes a stochastic
differential equation (SDE), see, for example, \citep{BL11,XM08}. 
In this case, $u$ is continuous but pointwise differentiable nowhere,
and the It\^o integral plays an important role there. As for the
relationship between the Kurzweil--Henstock integral and the It\^o
integral, we refer the interested readers to \citep{BL11,CTT01,TC12}
and references therein.

It is well-known that regulated functions (that is, a function
whose one-side limits exist at every point of its domain) contain
continuous functions and functions of bounded variation as special
cases \citep{DF91}. Therefore, it is natural to consider the
situation when $u$ is a regulated function, see \citep{PT93,MT94}.
Denote by $G[0,1]$ the space of all real regulated functions on
$[0,1]$, endowed with the supremum norm $\|\cdot\|$. Since the DDE
allows both the inputs and outputs of the systems to be
discontinuous, most conventional methods for ODEs are inapplicable, 
and thus the study of DDEs becomes very interesting and important.

The purpose of our paper is to apply the Leray--Schauder nonlinear
alternative and Kurzweil--Henstock--Stieltjes integrals to establish
existence of a solution to the second order DDE of type
\begin{equation}
\label{eq1.1}
-D^2x=f(t,x)+g(t,x)Du,\quad t\in [0,1],
\end{equation}
subject to the three-point boundary condition (cf. \citet{SZ15})
\begin{equation}
\label{eq1.2} 
x(0)=\beta Dx(0),
\quad 
Dx(1)+Dx(\eta)=0,
\end{equation}
where $D^2 x$ stands for the second order distributional derivative
of the real function $x \in G[0,1]$, $u\in G[0,1]$, $\beta$ is a
constant, and $\eta \in [0,1]$. This approach is well-motivated since 
this topic has not yet been addressed in the literature, and by the 
fact that the Kurzweil--Henstock--Stieltjes integral is a powerful 
tool for the study of DDEs. We assume that $f$ and $g$ satisfy 
the following assumptions:
\begin{itemize}
\item[$(H_{{1}})$]  $f(t,x)$ is Kurzweil--Henstock integrable 
with respect to $t$ for all $x\in G[0,1]$;

\item[$(H_{{2}})$] $f(t,x)$ is continuous with respect to $x$ 
for all $t\in [0,1]$;

\item[$(H_{{3}})$] there exist nonnegative Kurzweil--Henstock 
integrable functions $k$ and $h$ such that 
$$
- k\| x\|-h\leq f(\cdot,x)
\leq k\| x\|+h 
\quad \forall x \in B_r,
$$ 
where $B_r=\{x\in G[0,1]\ :\ \|x\|\leq r\}$, $r>0$;

\item[$(H_{{4}})$] $g(t,x)$ is a function with bounded variation 
on $[0,1]$ and $g(0,x)=0$ for all $x\in G[0,1]$;

\item[$(H_{{5}})$] $g(t,x)$ is continuous with respect 
to $x$ for all $t\in [0,1]$;

\item[$(H_{{6}})$] there exists $M>0$ such that 
$$
\sup_{x \in B_r}\Var_{[0,1]}g\leq M,
$$ 
where 
$$
\Var_{[0,1]}g=\sup\sum\limits_n |g(s_n,x(s_n))-g(t_n,x(t_n))|,
$$ 
the supremum taken over every sequence $\{(t_n, s_n)\}$ 
of disjoint intervals in $[0,1]$, 
is called the total variation of $g$ on $[0,1]$.
\end{itemize}

Now, we state our main result.

\begin{theorem}[Existence of a solution to problem \eqref{eq1.1}--\eqref{eq1.2}]
\label{thm3.1}
Suppose assumptions $(H_{1})$--$(H_{6})$ hold. If
\begin{equation*}
(|\beta|+2)\max_{t\in[0,1]}\left|\int_0^t k(s)ds\right|<1,
\end{equation*}
then problem \eqref{eq1.1}--\eqref{eq1.2} has at least one solution.
\end{theorem}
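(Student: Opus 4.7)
The plan is to recast \eqref{eq1.1}--\eqref{eq1.2} as a fixed-point equation in $G[0,1]$ and then apply the Leray--Schauder nonlinear alternative. Integrating the relation $-D^2 x = f(t,x)+g(t,x)Du$ once on $[0,t]$ yields
\begin{equation*}
Dx(t)=Dx(0)-\int_0^t f(s,x(s))\,ds-\int_0^t g(s,x(s))\,du(s),
\end{equation*}
where the second integral is understood in the Kurzweil--Henstock--Stieltjes sense. Enforcing $Dx(1)+Dx(\eta)=0$ determines $Dx(0)$ in terms of $x$, and integrating once more together with $x(0)=\beta Dx(0)$ produces
\begin{equation*}
x(t)=(\beta+t)A(x)-\int_0^t(t-s)f(s,x(s))\,ds-\int_0^t(t-s)g(s,x(s))\,du(s),
\end{equation*}
where $A(x)=\tfrac12\bigl[\int_0^1 f+\int_0^\eta f+\int_0^1 g\,du+\int_0^\eta g\,du\bigr]$. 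Calling the right-hand side $(Tx)(t)$, solutions of \eqref{eq1.1}--\eqref{eq1.2} are precisely the fixed points of $T\colon G[0,1]\to G[0,1]$.

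The next step is to verify that $T$ is completely continuous on the closed ball $\overline{B_r}$. Well-definedness uses $(H_1)$ and $(H_4)$ together with the standard fact that the indefinite KH integral, and the KHS integral against a regulated $u$ with bounded-variation integrand, are regulated in the upper limit. Continuity of $T$ reduces to continuity of the superposition operators $x\mapsto f(\cdot,x(\cdot))$ and $x\mapsto g(\cdot,x(\cdot))$, which is delivered by $(H_2)$, $(H_5)$, and a controlled-convergence argument dominated by $k\|x\|+h$ and by the variation bound in $(H_6)$. For compactness one invokes the Arzel\`a--Ascoli analogue in $G[0,1]$: pointwise boundedness of $T(\overline{B_r})$ is immediate, and equi-regulatedness follows since $t\mapsto\int_0^t(t-s)f\,ds$ has controlled one-sided jumps, while $t\mapsto\int_0^t(t-s)g\,du$ inherits its jumps from those of $u$, with size governed by the uniform bounded-variation bound on $(t-s)g(s,x(s))$ coming from $(H_6)$.

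The main technical step is the a priori estimate. Suppose $x=\lambda Tx$ with $\lambda\in(0,1)$, set $F(s)=\int_0^s f(r,x(r))\,dr$, and use Fubini (or the primitive form of integration by parts) to rewrite $\int_0^t(t-s)f(s,x(s))\,ds=\int_0^t F(s)\,ds$. The two-sided bound in $(H_3)$ integrates to $|F(s)|\le\|x\|\int_0^s k+\int_0^s h$, so, setting $K=\max_{t\in[0,1]}|\int_0^t k|$ and $H=\max_{t\in[0,1]}|\int_0^t h|$, we have $|F(s)|\le K\|x\|+H$ uniformly, hence $|\int_0^t(t-s)f\,ds|\le K\|x\|+H$. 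For the Stieltjes piece, integration by parts (legitimized by $g(0,x)=0$ in $(H_4)$ and the bound $M$ in $(H_6)$) yields $|\int_0^t(t-s)g(s,x(s))\,du(s)|\le G$ for a constant $G$ of order $M\|u\|$. Identical estimates control the four integrals making up $A(x)$, giving $|A(x)|\le K\|x\|+H+G$, and combining the three bounds produces
\begin{equation*}
\|x\|\le\|Tx\|\le(|\beta|+2)K\|x\|+(|\beta|+2)(H+G).
\end{equation*}
Since $(|\beta|+2)K<1$ by hypothesis, this forces $\|x\|\le R:=(|\beta|+2)(H+G)/(1-(|\beta|+2)K)$.

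Choosing any $r>R$ and applying the Leray--Schauder nonlinear alternative on the open ball $U=\{x\in G[0,1]:\|x\|<r\}$ then closes the argument: the a priori bound rules out solutions of $x=\lambda Tx$ on $\partial U$ with $\lambda\in(0,1)$, so $T$ must have a fixed point in $\overline{U}$, which is the desired solution. The main obstacle I anticipate is the compactness step, since $G[0,1]$ lacks the familiar equicontinuity criterion and one must argue instead via equi-regulatedness; the Stieltjes term is particularly delicate because $u$ itself may carry jumps, but the Saks--Henstock machinery combined with $(H_6)$ should supply the uniform control on one-sided limits that is needed.
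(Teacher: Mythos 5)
Your proposal is correct and follows essentially the same route as the paper: the same equivalent integral reformulation with $A(x)=Dx(0)$, the same complete-continuity argument via equi-regulatedness and the Ascoli--Arzel\`a analogue in $G[0,1]$, the same $(|\beta|+2)(K\|x\|+H+G)$ estimate, and the Leray--Schauder alternative. The only cosmetic differences are that you phrase the final step as an a priori bound for $x=\lambda Tx$ with $\lambda\in(0,1)$ and pick $r>R$, whereas the paper chooses $r$ so that $\mathcal{T}(B_r)\subseteq B_r$ and contradicts $\lambda>1$ directly; these are equivalent.
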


If $k(t)\equiv 0$ on $[0,1]$, then $(H_3)$ can be reduced to
\begin{itemize}
\item[$(H_3')$] there exists a nonnegative 
Kurzweil--Henstock function $h$ such that 
$$
-h\leq f(\cdot,x)\leq h
\quad \forall x \in B_r.
$$
\end{itemize}
Thus, the following result holds as a direct consequence.

\begin{corollary}
\label{Cor3.1}
Assume that $(H_{1})$, $(H_{2})$, $(H_{3}')$ and $(H_{4})$--$(H_{6})$ 
are fulfilled. Then, problem \eqref{eq1.1}--\eqref{eq1.2} 
has at least one solution.
\end{corollary}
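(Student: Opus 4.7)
The plan is to derive Corollary \ref{Cor3.1} as an immediate specialization of Theorem \ref{thm3.1}, rather than to re-run the Leray--Schauder machinery. The key observation is that $(H_3')$ is literally the case $k \equiv 0$ of $(H_3)$: if one sets $k(t) \equiv 0$ on $[0,1]$, then the inequality $-k\|x\| - h \leq f(\cdot,x) \leq k\|x\| + h$ collapses to $-h \leq f(\cdot,x) \leq h$, and $k$ is trivially nonnegative and Kurzweil--Henstock integrable. Thus $(H_3')$ implies $(H_3)$ with this choice of $k$, while the remaining hypotheses $(H_1)$, $(H_2)$, $(H_4)$, $(H_5)$, $(H_6)$ are shared verbatim.

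With this identification, I would then verify the smallness condition of Theorem \ref{thm3.1}. Since $k(s) \equiv 0$, we have
\begin{equation*}
(|\beta|+2)\max_{t\in[0,1]}\left|\int_0^t k(s)\,ds\right|
= (|\beta|+2)\cdot 0 = 0 < 1,
\end{equation*}
so the hypothesis is automatically satisfied regardless of the value of $\beta$. Hence all assumptions of Theorem \ref{thm3.1} are in force.

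Invoking Theorem \ref{thm3.1} then yields the existence of at least one solution to problem \eqref{eq1.1}--\eqref{eq1.2}, which is exactly the conclusion of Corollary \ref{Cor3.1}. There is essentially no obstacle here: the corollary is a free consequence of the main theorem, and the only thing worth flagging explicitly in the write-up is that the unconditional nature of the conclusion (no smallness condition appears in the statement of the corollary) is precisely because the degenerate choice $k \equiv 0$ makes the smallness condition vacuous.
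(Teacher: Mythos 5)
Your proposal is correct and matches the paper's own reasoning exactly: the authors also obtain Corollary~\ref{Cor3.1} as a direct consequence of Theorem~\ref{thm3.1} by taking $k\equiv 0$, which reduces $(H_3)$ to $(H_3')$ and renders the smallness condition vacuous since $(|\beta|+2)\max_{t\in[0,1]}\left|\int_0^t k(s)\,ds\right|=0<1$. Your explicit flagging of why no smallness condition appears in the corollary is a nice touch, but the argument is the same.
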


It is worth to mention that condition $(H_3')$, together 
with $(H_{1})$ and $(H_{2})$, was firstly proposed by 
\citep{TSC}, to deal with first-order Cauchy problems.

The paper is organized as follows. In Section~\ref{sec2},
we give two useful lemmas: we prove that under our hypotheses
problem \eqref{eq1.1}--\eqref{eq1.2} can be rewritten in an 
equivalent integral form (Lemma~\ref{lem3.1}) and we recall
the Leray--Schauder theorem (Lemma~\ref{lem3.2}).
Then, in Section~\ref{sec3}, we prove our existence result
(Theorem~\ref{thm3.1}). We end with Section~\ref{sec:4}, 
providing two illustrative examples. Along all the manuscript, 
and unless stated otherwise, we always assume that 
$x, u\in G[0,1]$. Moreover, we use the symbol $\int_a^b$ 
to mean $\int_{[a,b]}$.


\section{Auxiliary Lemmas}
\label{sec2}

By $(H_1)$ and $(H_4)$, we define
\begin{equation}
\label{eqFG}
\begin{split}
F(t,x)&=\int_0^t f(s,x(s))ds,\\
G_u(t,x)&=\int_0^t g(s,x(s))du(s),
\end{split}
\end{equation}
for all $t\in[0,1]$.

\begin{lemma}
\label{lem3.1}
Under the assumptions $(H_1)$--$(H_6)$,
problem \eqref{eq1.1}--\eqref{eq1.2} 
is equivalent to the integral equation
\begin{equation}
\label{eq3.1}
\begin{split}
x(t)=&\frac{t+\beta}{2}\left(F(1,x)
+F(\eta,x)+G_u(1,x)+G_u(\eta,x)\right)\\
&-\int_0^t F(s,x)ds-\int_0^t G_u(s,x)ds
\end{split}
\end{equation}
on $[0,1]$, where $F$ and $G_u$ are given in \eqref{eqFG}, 
$u\in G[0,1]$, and $\beta$ and $\eta$ are constants 
with $0\leq \eta\leq 1$.
\end{lemma}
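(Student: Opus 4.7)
The plan is to establish the equivalence in the standard way: transform the second-order distributional problem into the integral equation by two successive integrations, then show the converse by differentiating twice in the sense of distributions.

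First, I would start with $-D^2 x = f(t,x) + g(t,x)Du$ on $[0,1]$. Integrating both sides over $[0,t]$ in the Kurzweil--Henstock--Stieltjes sense (which is legitimate by $(H_1)$ and $(H_4)$) yields
\begin{equation*}
-Dx(t) + Dx(0) = F(t,x) + G_u(t,x),
\end{equation*}
so $Dx(t) = Dx(0) - F(t,x) - G_u(t,x)$. Integrating once more and using $x(0) = \beta Dx(0)$ gives
\begin{equation*}
x(t) = (t+\beta)Dx(0) - \int_0^t F(s,x)\,ds - \int_0^t G_u(s,x)\,ds.
\end{equation*}

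Next, I would use the second boundary condition $Dx(1) + Dx(\eta) = 0$ to pin down $Dx(0)$. Evaluating the formula for $Dx$ at $t=1$ and $t=\eta$ and summing produces
\begin{equation*}
0 = 2 Dx(0) - F(1,x) - F(\eta,x) - G_u(1,x) - G_u(\eta,x),
\end{equation*}
whence $Dx(0) = \tfrac{1}{2}\bigl(F(1,x)+F(\eta,x)+G_u(1,x)+G_u(\eta,x)\bigr)$. Substituting back recovers exactly \eqref{eq3.1}.

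For the converse direction, starting from \eqref{eq3.1}, I would differentiate distributionally. The first derivative produces
\begin{equation*}
Dx(t) = \tfrac{1}{2}\bigl(F(1,x)+F(\eta,x)+G_u(1,x)+G_u(\eta,x)\bigr) - F(t,x) - G_u(t,x),
\end{equation*}
and a second differentiation gives $-D^2 x = DF(t,x) + DG_u(t,x) = f(t,x) + g(t,x)Du$, recovering the equation. Then I would check the boundary conditions directly from the explicit formulas: evaluating \eqref{eq3.1} at $t=0$ gives $x(0) = \tfrac{\beta}{2}(F(1,x)+F(\eta,x)+G_u(1,x)+G_u(\eta,x)) = \beta Dx(0)$, and adding $Dx(1)$ and $Dx(\eta)$ yields $0$ by construction.

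The main subtlety lies not in the algebraic manipulation but in justifying the two distributional integrations within the Kurzweil--Henstock--Stieltjes framework for $x,u \in G[0,1]$: the compositions $t\mapsto f(t,x(t))$ and $t\mapsto g(t,x(t))$ must be shown to be, respectively, Kurzweil--Henstock integrable and Kurzweil--Henstock--Stieltjes integrable against $du$, so that $F(\cdot,x)$ and $G_u(\cdot,x)$ are themselves well-defined regulated functions to which the fundamental theorem of calculus for the distributional derivative can be applied. Hypotheses $(H_1)$--$(H_2)$ and $(H_4)$--$(H_5)$ are tailored exactly to make this step work, and the bounded variation of $g(\cdot,x)$ together with $g(0,x)=0$ in $(H_4)$ ensures that the Stieltjes integral against $Du$ behaves well at the endpoint. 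Once these integrability issues are handled, the remainder of the argument is routine.
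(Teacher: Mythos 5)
Your proposal is correct and follows essentially the same route as the paper: integrate the equation twice, use the two boundary conditions to identify $Dx(0)$ and $x(0)$, and differentiate twice for the converse. The only cosmetic difference is that you integrate $Dx(t)=Dx(0)-F(t,x)-G_u(t,x)$ directly a second time, whereas the paper first derives the $(t-s)$-kernel representation via the identity $\int_0^t s\,D^2x(s)\,ds = tDx(t)-x(t)+x(0)$ and then invokes the substitution formula of Tvrd\'y to pass to $\int_0^t F(s,x)\,ds+\int_0^t G_u(s,x)\,ds$.
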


\begin{proof}
For all $t\in [0,1]$, $s\in[0,1]$, and $x\in G[0,1]$, we have
\begin{equation}
\label{eq3.2}
\int_{0}^ts D^2x(s)ds=\int_{0}^ts d(Dx(s))=tDx(t)-x(t)+ x(0)
\end{equation}
by the properties of the distributional derivative.
Integrating \eqref{eq1.1} once over $[0,t]$, we obtain that
\begin{equation}
\label{eq3.4}
Dx(t)=Dx(0)-F(t,x)- G_u(t,x).
\end{equation}
Combining with the boundary conditions \eqref{eq1.2}, one has
\begin{equation}
\label{eq3.5}
Dx(0)=\frac{1}{2}\left(
F(1,x)+F(\eta,x)+G_u(1,x)+G_u(\eta,x)\right)
\end{equation}
and
\begin{equation}
\label{eq3.6}
x(0)=\frac{\beta}{2}\left(
F(1,x)+F(\eta,x)+G_u(1,x)+G_u(\eta,x)\right).
\end{equation}
It follows from \eqref{eq3.2} and \eqref{eq3.4} that
\begin{equation}
\label{eq3.7}
\begin{split}
x(t)=&tDx(0)+x(0)-\int_{0}^t (t-s)f(s,x(s))ds\\
&-\int_{0}^t (t-s)g(s,x(s))du(s).
\end{split}
\end{equation}
Therefore, by \eqref{eq3.5}--\eqref{eq3.7} and the 
substitution formula \citep[Theorem 2.3.19]{TVR02}, one has
\begin{equation*}
\begin{split}
x(t)= &\frac{t+\beta}{2}\left(
F(1,x)+F(\eta,x)+G_u(1,x)+G_u(\eta,x)\right)\\
&-\int_0^t F(s,x)ds-\int_0^t G_u(s,x)ds, 
\quad t\in [0,1].
\end{split}
\end{equation*}
It is not difficult to calculate that 
\eqref{eq1.1}--\eqref{eq1.2} holds by taking the derivative both
sides of \eqref{eq3.1}. This completes the proof.
\end{proof}

Now, we present the well-known Leray--Schauder 
nonlinear alternative theorem.

\begin{lemma}[See \citet{KD85}]
\label{lem3.2}
Let $E$ be a Banach space, $\Omega$ a bounded open subset of $E$, 
$0\in \Omega$, and $T: \overline {\Omega}\rightarrow E$ be a 
completely continuous operator. Then, either there exists 
$x\in \partial \Omega$ such that $T(x)=\lambda x$ with $\lambda>1$, 
or there exists a fixed point $x^*\in\overline{\Omega}$.
\end{lemma}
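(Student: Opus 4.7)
The plan is to establish this via Schauder's fixed point theorem combined with an Urysohn-type cutoff argument. First, I would introduce the auxiliary set
\[
U = \bigl\{x\in\overline{\Omega}:\ x=\lambda T(x)\text{ for some }\lambda\in[0,1]\bigr\}.
\]
This set is closed because $T$ is continuous and $\overline{\Omega}$ is closed; in fact, combining this with the complete continuity of $T$ one can check $U$ is even compact. The argument then splits on whether $U$ meets $\partial\Omega$.

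In the first case, $U\cap\partial\Omega\neq\emptyset$, so I pick $x\in\partial\Omega$ and $\lambda\in[0,1]$ with $x=\lambda T(x)$. Since $0\in\Omega$ and $x\in\partial\Omega$, one has $x\neq 0$, hence $\lambda\neq 0$. If $\lambda=1$, then $x$ is a fixed point in $\overline{\Omega}$, giving the second alternative. If $0<\lambda<1$, then $T(x)=(1/\lambda)\,x$ with $1/\lambda>1$ and $x\in\partial\Omega$, which is precisely the eigenequation alternative.

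Suppose instead $U\cap\partial\Omega=\emptyset$. Then $U$ and $E\setminus\Omega$ are disjoint closed subsets of the metric (hence normal) space $E$, so Urysohn's lemma produces a continuous $\mu:E\to[0,1]$ with $\mu\equiv 1$ on $U$ and $\mu\equiv 0$ on $E\setminus\Omega$. Define
\[
\tilde T(x)=\begin{cases}\mu(x)\,T(x),&x\in\overline{\Omega},\\ 0,&x\notin\overline{\Omega},\end{cases}
\]
which is continuous on $E$ (the two pieces agree on $\partial\Omega$, where $\mu=0$) and completely continuous because $T(\overline{\Omega})$ is relatively compact and $\mu$ is bounded. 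Choose $R>0$ so that the closed ball $B_R\subset E$ contains both $\overline{\Omega}$ and $T(\overline{\Omega})$; then $\tilde T(B_R)\subset\mathrm{conv}\bigl(T(\overline{\Omega})\cup\{0\}\bigr)\subset B_R$, and Schauder's fixed point theorem applied to $\tilde T:B_R\to B_R$ yields a fixed point $x^*$.

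To conclude, if $x^*\notin\overline{\Omega}$, then $\tilde T(x^*)=0=x^*$ would force $x^*=0\notin\overline\Omega$, contradicting $0\in\Omega$. Hence $x^*\in\overline{\Omega}$, so $x^*=\mu(x^*)\,T(x^*)$ with $\mu(x^*)\in[0,1]$ places $x^*$ in $U$, forcing $\mu(x^*)=1$ and $T(x^*)=x^*$. The delicate step I expect to be the main obstacle is the construction of $\tilde T$ and the verification that it is genuinely completely continuous on all of $E$ with an invariant ball $B_R$; both of these hinge on exploiting the compactness of $T(\overline{\Omega})$ provided by the complete continuity of $T$, together with the normality of $E$ needed for the Urysohn cutoff.
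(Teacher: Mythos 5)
Your argument is correct and complete. Note, however, that the paper contains no proof of this lemma at all: it is imported verbatim from \citet{KD85}, so the relevant comparison is with the classical proof in the literature, and yours is essentially that standard argument --- the homotopy set $U=\{x\in\overline{\Omega}:\ x=\lambda T(x),\ \lambda\in[0,1]\}$, the dichotomy on whether $U$ meets $\partial\Omega$, a cutoff function (in a metric space one can take explicitly $\mu(x)=d(x,E\setminus\Omega)/\bigl(d(x,U)+d(x,E\setminus\Omega)\bigr)$, so Urysohn in full generality is not even needed), and Schauder applied to the truncated extension $\tilde T$. Two small refinements. First, when verifying that $U$ is closed, continuity of $T$ alone is not quite the whole story: given $x_n\to x$ with $x_n=\lambda_n T(x_n)$, you must extract a convergent subsequence $\lambda_{n_k}\to\lambda\in[0,1]$, which is where compactness of $[0,1]$ enters; this is routine but worth stating, and it is also where your observation that $U$ is compact (as a closed subset of $[0,1]\cdot\overline{T(\overline{\Omega})}$) gets used. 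Second, the step you flag as the main obstacle --- choosing $R$ and checking invariance of the ball $B_R$ --- can be bypassed entirely: $\tilde T$ maps all of $E$ into $C:=\overline{\mathrm{conv}}\bigl(T(\overline{\Omega})\cup\{0\}\bigr)$, which is compact by Mazur's theorem and convex, so Schauder applies directly to $\tilde T\colon C\to C$ with no ball required. Your handling of the endgame is exactly right: $x^*\in\overline{\Omega}$ forces $x^*\in U$, hence $\mu(x^*)=1$ and $T(x^*)=x^*$; and in the boundary case the subcase $\lambda=0$ is excluded precisely because $0\in\Omega$ while $x\in\partial\Omega$, while $\lambda=1$ yields a fixed point on $\partial\Omega\subset\overline{\Omega}$, matching the second alternative as stated.
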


We prove existence of a solution to problem 
\eqref{eq1.1}--\eqref{eq1.2} with the help 
of the preceding two lemmas.


\section{Proof of Theorem~\ref{thm3.1}}
\label{sec3}

Let
\begin{equation}
\label{eq3.9}
\begin{split}
H(t)&=\int_0^t h(s)ds,\\
K(t)&=\int_0^t k(s)ds, 
\end{split}
\end{equation}
$t\in [0,1]$. Then, by $(H_3)$, 
$H$ and $K$ are continuous functions.
According to \eqref{eqFG} and ($H_1$), function $F$ is
continuous on [0,1], and
\begin{equation*}
\|F\|=\max_{t\in[0,1]}\left|\int_0^t f(s,x(s))ds\right|
\leq \|K\|\|x\|+\|H\|.
\end{equation*}
On the other hand, by \citep[Proposition 2.3.16]{TVR02} and $(H_4)$,
$G_u$ is regulated on $[0,1]$. Further, from $(H_6)$ and the
H\"older inequality \citep[Theorem 2.3.8]{TVR02}, it follows that
\begin{equation*}
\begin{split}
\|G_u\|&\leq \left(|g(0,x(0))|+ |g(1,x(1))|+ \Var_{[0,1]} g
\right)\|u\|\\
&\leq 2M\|u\|.
\end{split}
\end{equation*}
Let
\begin{equation}
\label{eq3.14}
r=\frac{(|\beta|+2)(\|H\|+2M\|u\|)}{1-(|\beta|+2)\|K\|}>0.
\end{equation}
For each $x\in B_r$ and $t\in [0,1]$, define the operator 
$\mathcal T:G[0,1]\to G[0,1]$ by 
\begin{equation}
\label{eq3.15}
\begin{split}
\mathcal{T}x(t):=&\frac{t+\beta}{2}\left(
F(1,x)+F(\eta,x)+G_u(1,x)+G_u(\eta,x)\right)\\
&-\int_0^t F(s,x)ds-\int_0^t G_u(s,x)ds.
\end{split}
\end{equation}
We prove that $\mathcal{T}$ is completely continuous in three steps.
Step 1: we show that $\mathcal{T}:B_r\rightarrow B_r$. Indeed, 
for all $x\in B_r$, one has
\begin{equation}
\label{eq3.16}
\begin{split}
\|\mathcal{T}x\| &\leq  (|\beta|+2)(\|F\|+\|G_u\|)\\ 
&\leq (|\beta|+2)(r\|K\|+\|H\|+2M\|u\|)\\
&= r
\end{split}
\end{equation}
by \eqref{eq3.14} and \eqref{eq3.15}.
Hence, $\mathcal{T}(B_r)\subseteq B_r$.
Step 2: we show that $\mathcal{T}(B_r)$ is equiregulated 
(see the definition in \citet{DF91}). For $t_{0}\in[0, 1)$ 
and $x\in B_r$, we have
\begin{equation*}
\begin{split}
&\left|\mathcal{T}x(t)-\mathcal{T}x(t_{0+})\right|\\
&=\left|\frac{t-(t_{0+})}{2}(F(1,x)+F(\eta,x)+G_u(1,x)+G_u(\eta,x))\right.\\
&\left. \qquad -\int_{t_{0+}}^{t}F(s,x)+G_u(s,x)ds\right|\\
&\leq 2\left|t-(t_{0+})\right|\left(r\|K\|+\|H\|+2M\|u\|\right)
\longrightarrow 0 
\end{split}
\end{equation*}
as $t\to t_{0+}$. Similarly, we can prove that 
$$
\left|\mathcal T x(t_{0-})
-\mathcal Tx(t)\right|\to 0 \text{ as } t\to t_{0-}
$$ 
for each $t_{0}\in(0,1]$. Therefore, $\mathcal{T}(B_r)$ 
is equiregulated on $[0,1]$. In view of Steps 1 and 2 
and an Ascoli--Arzel\`{a} type theorem
\citep[Corollary 2.4]{DF91}, we conclude that $\mathcal{T}(B_r)$ 
is relatively compact. Step 3: we prove that $\mathcal{T}$ 
is a continuous mapping. Let $x\in B_r$ and $\{x_n\}_{n\in \mathbb{N}}$ 
be a sequence in $B_r$ with $x_n \to x$ as $n\to \infty$. 
By $(H_{2})$ and $(H_{4})$, one has
$$
f(\cdot,x_n)\rightarrow f(\cdot,x)
\quad \text{and} \quad g(\cdot,x_n)\rightarrow g(\cdot,x)
$$ 
as $n\rightarrow \infty$.
According to the assumption $(H_{3})$ and
the convergence Theorem 4.3 of \citep{Lee}, we have
$$
\lim _{n \to \infty}  \int_{0}^t f(s,x_n(s))ds 
= \int_0^t f(s,x(s))ds, 
\quad t\in [0,1].
$$ 
Moreover, $(H_{6})$, together with the convergence 
Theorem~1.7 of \citep{PK06}, yields that
$$
\lim _{n \to \infty} \int_{0}^t g(s,x_n(s))du(s) 
= \int_0^t g(s,x(s))du(s), 
$$
$t\in [0,1]$. Hence,
\begin{equation*}
\begin{split}
&\mathcal{T}x_{n}(t)-\mathcal{T}x(t)\\
&= \frac{\beta+t}{2}\left[\left(F(1,x_n)
+F(\eta,x_n)+G_u(1,x_n)+G_u(\eta,x_n)\right)\right.\\
&\left.\quad -(F(1,x)+F(\eta,x)+G_u(1,x)+G_u(\eta,x))\right]\\
&\quad -\int_{0}^t F(s,x_n(s))-F(s,x(s))ds\\
&\quad -\int_{0}^t G_u(s,x_n)-G_u(s,x)ds,\quad t\in[0,1].
\end{split}
\end{equation*}
Therefore, $\lim_{n\to \infty}\mathcal{T}x_n(\cdot)=\mathcal{T}x(\cdot)$,
and thus $\mathcal{T}$ is a completely continuous operator.
Finally, let 
$$
\Omega=\left\{x\in G[0,1]\ :\ \|x\|<r\right\}
$$ 
and assume that $x\in\partial \Omega$ 
such that $\mathcal{T}x=\lambda x$ for $\lambda>1$.
Then, by \eqref{eq3.16}, one has
\begin{equation*}
\lambda r =\lambda \|x\|=\|\mathcal{T}x\|\leq r,
\end{equation*}
which implies that $\lambda \leq 1$. This is a contradiction.
Therefore, by Lemma~\ref{lem3.2}, there exists a fixed
point of $\mathcal{T}$, which is a solution 
of problem \eqref{eq1.1}$-$\eqref{eq1.2}.
The proof of Theorem~\ref{thm3.1} is complete.


\section{Illustrative Examples}
\label{sec:4}

We now give two examples to illustrate Theorem~\ref{thm3.1} 
and Corollary~\ref{Cor3.1}, respectively. Let $g^*(t,x(t))=0$ 
if $t=0$ and $g^*(t,x(t))=1$ if $t\in (0,1]$ for all $x\in B_r$. 
Then, it is easy to see that $g^*$ satisfies hypotheses 
$(H_4)$--$(H_6)$ with $M=1$.

\begin{example}
\label{exa4.1}
Consider the boundary value problem
\begin{equation}
\label{eq4.1}
\left\{
\begin{array}{lcr}
-D^2 x=\frac{x\sin(x)}{3\sqrt{5+t}}+g^*(t,x)D\mathcal
H(t-\frac{1}{2}) ,\ \ t\in [0,1],\\
x(0)=4Dx(0),\quad Dx(1)+Dx(\frac{1}{4})=0,
\end{array}\right.
\end{equation}
where $\mathcal H$ is the Heaviside function, i.e., 
$\mathcal H(t)=0$ if $t<0$ and $\mathcal H(t)=1$ if $t>0$.
It is easy to see that $\mathcal H$ is of bounded variation, 
but not continuous. Let $f(t,x)=\frac{x\sin(x)}{2\sqrt{4+t}}$,
$g(t,x)=g^*(t,x)$, and $u(t)=\mathcal H(t-\frac{1}{2})$. Then,
$(H_1)$, $(H_2)$, and $(H_4)$--$(H_6)$ hold. Moreover, there exist $HK$
integrable functions $k(t)=\frac{1}{3\sqrt{5+t}}$ and $h(t)=1$ such that
$$
-k\|x\|-h\leq  f(\cdot,x)\leq k\|x\|+h
\quad \forall x\in G[0,1],
$$
i.e., $(H_3)$ holds. Further, by \eqref{eq3.9},
$$
\|K\|=\frac{2}{3}\left(\sqrt{6}-\sqrt{5}\right),
\quad \|H\|=1,
\quad \|u\|=\|\mathcal H\|=1.
$$ 
Let $\beta=4$ and $\eta=\frac{1}{4}$. 
From \eqref{eq3.14}, we have
\begin{equation*}
r=\frac{(|\beta|+2)(\|H\|+2M\|u\|)}{1-(|\beta|+2)\|K\|}
=\frac{18}{1-4(\sqrt{6}-\sqrt{5})}.
\end{equation*}
Therefore, by Theorem~\ref{thm3.1}, problem \eqref{eq4.1} 
has at least one solution $x^*$ with
$$
\|x^*\| \leq \frac{18}{1-4(\sqrt{6}-\sqrt{5})}.
$$
\end{example}

\begin{example}
\label{exa4.2}
Consider the boundary value problem
\begin{equation}
\label{eq4.4}
\left \{
\begin{array}{lcr}
-D^2x=\sin(x)+ 2t\sin( {t}^{-2})-\frac {2}{t}\cos({t}^{-2})\\
\qquad +g^*(t,x)D\mathcal W,
\quad t\in [0,1],\\
x(0)=-\frac{1}{6}Dx(0),\quad Dx(1)+Dx(\frac{2}{3})=0,
\end{array}\right.
\end{equation}
where $\mathcal W$ is the Weierstrass function 
$$
\mathcal W(t)=\sum_{n=1}^{\infty}\frac{\sin 7^n\pi t}{2^n}
$$ 
in \citet{GHH16}. It is well-known that $\mathcal W(t)$ 
is continuous but pointwise differentiable nowhere on $[0,1]$, 
so $\mathcal W(t)$ is not of bounded variation. Let
\begin{equation*}
\begin{split}
f(t,x)&=\sin(x)+ 2t\sin( {t}^{-2}) -\frac{2}{t}\cos({t}^{-2}),\\
g(t,x)&=g^*(t,x),\\ 
u&=\mathcal  W. 
\end{split}
\end{equation*}
Then, $(H_1)$, $(H_2)$ and $(H_4)$--$(H_6)$ hold. 
Moreover, let
\begin{equation*}
k(t)=0,\quad h(t)=1+2t\sin( {t}^{-2})
-\frac {2}{t}\cos({t}^{-2}).
\end{equation*}
Obviously, the highly oscillating function $h(t)$ is
Kurzweil--Henstock integrable but not Lebesgue integrable, and
\begin{equation*}
H(t)=\int_0^t h(s)ds= \left\{\begin{array}{ll}
t+t^2\sin(t^{-2}), \quad &t\in(0,1],\\
0, &t=0.
\end{array}\right.
\end{equation*}
Moreover, we have
$$
-h\leq  f(\cdot,x)\leq h \quad \forall x\in G[0,1],
$$
that is, $(H_3)$ holds. Let $\beta=-\frac{1}{6}$ 
and $\eta=\frac{2}{3}$.	Since 
$$
0\leq \|u\|=\|\mathcal W\|
\leq \sum_{n=1}^{\infty}\frac{1}{2^n}=1,
\quad \|H\|_{\infty}=1+\sin(1),
$$
we have by \eqref{eq3.14} that
\begin{equation*}
\begin{split}
3.9899
&\approx\frac{13}{6}(\sin(1)+1)\\
&\leq  r = \frac{(|\beta|+2)(\|H\|+2M\|u\|)}{1-(|\beta|+2)\|K\|_{\infty}}\\
&\leq \frac{13}{6}(\sin(1)+3)\\
&\approx 8.3232.
\end{split}
\end{equation*}
Therefore, by Corollary~\ref{Cor3.1}, problem \eqref{eq4.4} 
has at least one solution $x^*$ with 
$$
\|x^*\|\leq
\frac{13}{6}(\sin(1)+3).
$$
\end{example}


\section*{Acknowledgments}

The authors are grateful to two referees for their
comments and suggestions. 



\end{document}